\documentclass[a4paper,twoside]{amsart}
\usepackage[T1]{fontenc}
\usepackage[utf8]{inputenc}

\usepackage{hyperref}
\usepackage[slantedGreeks, partialup, noDcommand]{kpfonts}
\usepackage{graphicx}

\hypersetup{ocgcolorlinks=true,allcolors=testc}
\hypersetup{
     colorlinks   = true,
     citecolor    = black
}
\hypersetup{linkcolor=black}
\hypersetup{urlcolor=black}

\newtheorem{theorem}{Theorem}
\newtheorem{lemma}[theorem]{Lemma}
\newtheorem{corollary}[theorem]{Corollary}

\newtheorem{remark}[theorem]{Remark}

\newtheorem*{definition*}{Definition}



\title[Sharp growth of the Ornstein--Uhlenbeck operator]{Sharp growth of the Ornstein--Uhlenbeck operator\\ on Gaussian tail spaces}  

\author{Alexandros Eskenazis}
\address{A. Eskenazis, Trinity College and Department of Pure Mathematics and Mathematical Statistics\\
University of Cambridge, UK.}
\email{ae466@cam.ac.uk}

\author{Paata Ivanisvili}
\address{P. Ivanisvili,
Department of Mathematics,
North Carolina State University\\
Raleigh, NC 27695.}
\email{pivanis@ncsu.edu} 

\thanks{A.~E.~was supported by a Junior Research Fellowship from Trinity College, Cambridge. P.~I.~ was supported in part by NSF grants DMS-1856486, DMS-2052645,  and CAREER DMS-2052865}

 
\begin{document}

\maketitle
\vspace{-3mm}

\begin{abstract}
Let $X$ be a standard Gaussian random variable. For any $p \in (1, \infty)$, we prove the existence of a universal constant $C_{p}>0$ such that the inequality
 $$
 (\mathbb{E} |h'(X)|^{p})^{1/p} \geq C_{p} \sqrt{d} (\mathbb{E} |h(X)|^{p})^{1/p}
 $$ 
 holds for all $d\geq 1$ and all polynomials $h : \mathbb{R} \to \mathbb{C}$ whose spectrum is supported on frequencies at least $d$, that is, $\mathbb{E} h(X) X^{k}=0$ for all $k=0,1, \ldots, d-1$.  As an application of this optimal estimate, we obtain an affirmative answer to the Gaussian analogue of a question of Mendel and Naor (2014) concerning the growth of the Ornstein--Uhlenbeck operator on tail spaces of the real line. We also show the same bound for the gradient of analytic polynomials in an arbitrary dimension. 
\end{abstract}

\bigskip

{\footnotesize
\noindent {\em 2020 Mathematics Subject Classification.} Primary: 41A17; Secondary: 28C20, 42C10, 30E10.

\noindent {\em Key words.} Gaussian measure, tail space, weighted Jackson inequality, Freud's inequality.}

\section{Introduction}
\subsection{Approximation theory on the hypercube and the Gauss space}

Fix $n\geq 1$, and let $\{-1,1\}^{n}$ be the hypercube of dimension $n$. Any function $f : \{-1,1\}^{n} \to \mathbb{C}$ can be decomposed into Fourier--Walsh series as
\begin{align*}
f(x) = \sum_{S \subseteq \{1, \ldots, n \}} a_{S} \prod_{j \in S} x_{j}
\end{align*}
where $x = (x_{1}, \ldots, x_{n}) \in \{-1,1\}^{n}$ and $a_{S} \in \mathbb{C}$ are the Fourier--Walsh coefficients. The hypercube Laplacian $\Delta$ of $f$ is defined as 
\begin{align*}
\Delta f = \sum_{j=1}^{n} D_{j}f, \quad \text{where} \quad D_{j}f(x) = \frac{f(x_{1}, \ldots, x_{j}, \ldots, x_{n}) -f(x_{1}, \ldots, -x_{j}, \ldots, x_{n}) }{2}
\end{align*}
and the discrete $L_p$ norm is given by
$$
\| f\|_{p} =\Big( \frac{1}{2^{n}} \sum_{x \in \{-1,1\}^{n}} |f(x)|^{p}\Big)^{1/p}.
$$

In their study of  super-expanders, Mendel and Naor~\cite{MN1} asked the following influential question\footnote{In fact, the original question of \cite{MN1} concerned vector-valued functions but we will restrict ourselves to the scalar-valued case for the ensuing discussion.}. Let $1<p<\infty$. Does there exist a universal constant $C_{p}>0$ such that for any $n, d \geq 0$ with $0\leq d \leq n$, we have 
\begin{align}\label{conj1}
\| \Delta f \|_{p} \geq C_{p} d \|f\|_{p}
\end{align}
for all $f : \{-1,1\}^{n} \to \mathbb{C}$ in the {\em $d$-th tail space}, i.e. all functions of the form
$$f(x) = \sum_{\substack{S \subseteq \{1, \ldots, n\} \\ |S|\geq d}} a_{S} \prod_{j \in S} x_{j}\ ?$$

This problem led to a study of a more general question, which is known as the {\em heat smoothing conjecture}, see \cite{OLEG}. Currently, the best known bounds towards \eqref{conj1} are of the form $\| \Delta f\|_{p} \geq C_{p} d^{\alpha_{p}} \| f\|_{p}$ for a certain  $\alpha_{p} \in [1/2,1)$, see \cite{MN1,EI2}.

A standard limiting argument  \cite[pp.~164-166]{Bec75} (see equation (5) there) shows that \eqref{conj1} implies the corresponding estimate in Gauss space
\begin{align}\label{conj2}
\| L h\|_{L^{p}(d\gamma_{n})} \geq C_{p} d \|h\|_{L^{p}(d\gamma_{n})}
\end{align}
for all $n\geq 1$ and all polynomials $h : \mathbb{R}^{n} \to \mathbb{C}$ in the corresponding Gaussian $d$-th tail space; here $L$ is the generator of the Ornstein--Uhlenbeck semigroup, see Section~\ref{mres} for the definitions of these notions. The purpose of the present paper is to investigate the Gaussian counterpart \eqref{conj2} of the discrete inequality \eqref{conj1}, in view of the additional continuous tools which have been developped in the weighted approximation theory literature. In particular, we shall present a proof of \eqref{conj2} for $n=1$ and a proof of \eqref{conj2} for {\em analytic} polynomials in all (even) dimensions.

\subsection{Definitions and statement of the main results}\label{mres}
For an integer $n \geq 1$, let 
$$
d\gamma_{n}(x) \stackrel{\mathrm{def}}{=} \frac{e^{-\frac{|x|^{2}}{2}}}{(\sqrt{2\pi})^{n}}dx
$$
be the standard Gaussian measure on $\mathbb{R}^{n}$,  where we denote\footnote{If $w=(w_{1}, \ldots, w_{n}) \in \mathbb{C}^{n}$ we also set $|w| = \sqrt{|w_{1}|^{2}+\ldots+|w_{n}|^{2}}$, where $|z|$ denotes the absolute value of a complex number $z \in \mathbb{C}$, and this notation is consistent with the usual identification of $\mathbb{C}^n$ with $\mathbb{R}^{2n}$.} by $|x| \stackrel{\mathrm{def}}{=} \sqrt{x_{1}^{2}+\ldots+x_{n}^{2}}$ the Euclidean norm of a vector $x = (x_{1}, \ldots, x_{n}) \in \mathbb{R}^{n}$. Let $\alpha = (\alpha_{1}, \ldots, \alpha_{n}) \in (\mathbb{Z}_{+})^{n}$ be a multi-index, where $\mathbb{Z}_{+}$ denotes the set of nonnegative integers, and set $|\alpha| \stackrel{\mathrm{def}}{=} \alpha_{1}+\ldots+\alpha_{n}$. The Hermite polynomial of degree $\alpha$ is defined as 
\begin{align*}
H_{\alpha}(x)  \stackrel{\mathrm{def}}{=}  \int_{\mathbb{R}^{n}}  (x+iy)^{\alpha} d\gamma_{n}(y) \quad \text{where} \quad (x+iy)^{\alpha} \stackrel{\mathrm{def}}{=} \prod_{j=1}^{n} (x_{j}+iy_{j})^{\alpha_{j}}.
\end{align*}
For a finite $p \geq 1$, consider the space 
 \begin{align*}
 L^{p}(\mathbb{R}^{n}, d\gamma_{n}) = \Bigl\{ f \in \mathcal{L}(\mathbb{R}^{n}; \mathbb{C})\,: \,  \; \; \; \|f\|^{p}_{L^{p}(d\gamma_{n})} = \int_{\mathbb{R}^{n}} |f|^{p} d\gamma_{n} < \infty\Bigr\}, 
 \end{align*}
 where we denote by $\mathcal{L}(\mathbb{R}^{n}; \mathbb{C})$ the space of all functions  $f :\mathbb{R}^{n} \to \mathbb{C}$ which are Lebesgue measurable. When $p=\infty$ we simply set $L^{\infty}(\mathbb{R}^{n}; d\gamma_{n})$ to be the space of complex-valued measurable functions such that $\|f\|_{L^{\infty}(d\gamma_{n})} \stackrel{\mathrm{def}}{=}\mathrm{ess sup}\, |f|<\infty$. 
The Hermite polynomials $\{ H_{\alpha}\}_{\alpha \in (\mathbb{Z}_{+})^{n}}$ form an orthogonal system in $L^{2}(d\gamma_{n})$ with respect to the inner product 
 \begin{align*}
 \langle f, g \rangle \stackrel{\mathrm{def}}{=} \int_{\mathbb{R}^{n}} f \overline{g} d\gamma_{n} \quad \text{for all} \quad f, g \in L^{2}(d\gamma_n). 
 \end{align*}
 Orthogonality along with the fact $\mathrm{deg}(H_{\alpha}) \leq |\alpha|$ imply that  
 \begin{align*}
 \mathcal{P}^{\leq d} \stackrel{\mathrm{def}}{=}\mathrm{span}_{\mathbb{C}}\big\{ x^{\alpha}\, :\, |\alpha|\leq d\big\} =  \mathrm{span}_{\mathbb{C}}\big\{ H_{\alpha}(x) \, :\, |\alpha|\leq d\big\}
 \end{align*}
 for any integer $d\geq 0$. Hence, the system $\{H_{\alpha}\}_{\alpha \in (\mathbb{Z}_{+})^{n}}$ is complete in $L^{p}(d\gamma_{n})$ for all $1\leq p <\infty$. Our main space of investigation will be the (finite) $d$-tail space
 \begin{align*}
 \mathcal{P}^{\geq d} \stackrel{\mathrm{def}}{=} \left\{f \in \mathcal{L}(\mathbb{R}^{n}; \mathbb{C})\, :\, f = \sum_{ |\alpha|\geq d} c_{\alpha} H_{\alpha}\; \;  \text{is a finite sum with}\, \; \;  c_{\alpha} \in \mathbb{C} \right\}.
 \end{align*}

 Let $\Delta  = \sum_{j=1}^{n} \partial^{2}_{x_{j}}$ be the usual Laplacian on $\mathbb{R}^n$ and  $\nabla g = (\partial_{x_{1}}g, \ldots, \partial_{x_{n}} g)$ be the gradient of $g$.   We denote by $L = -(\Delta -x\cdot \nabla)$ the (positive) generator  of the Ornstein--Uhlenbeck semigroup, where $x\cdot \nabla  = \sum_{j=1}^{n} x_{j} \partial_{x_{j}}$. Recall that $L H_{\alpha} = |\alpha| H_{\alpha}$. We shall first address the analogue of \eqref{conj2} for the gradient $\nabla$ instead of $L$.
 
 \begin{definition*}
 Fix $p \in (1, \infty)$ and $n \geq 1$. Let $F(n,p)$ be the  smallest  constant such that 
 \begin{align}\label{rfreud}
  \sqrt{d+1}\|f\|_{L^{p}(d\gamma_{n})} \leq F(n,p)\,  \| \nabla f\|_{L^{p}(d\gamma_{n})} 
   \end{align}
 holds for all $d\geq0$ and $f \in \mathcal{P}^{\geq d+1}$. Similarly, let $J(n,p)$ be the smallest constant\footnote{The letter $J$ in the definition of the constant $J(n.p)$ is referring to Jackson \cite{Jac12}, who first investigated inequalities in the spirit of \eqref{jackson} for trigonometric polynomials on the torus $\mathbb{T}$.} such that 
 \begin{align}\label{jackson}
  \inf_{\varphi \in \mathcal{P}^{\leq d}} \| g-\varphi\|_{L^{p}(d\gamma_{n})} \leq \frac{J(n,p)}{\sqrt{d}} \|\nabla g\|_{L^{p}(d\gamma_{n})}
 \end{align}
 holds for all $d\geq1$ and all polynomials $g :\mathbb{R}^{n} \to \mathbb{C}$. 
 \end{definition*}
 
 Our first result is a dimension-free equivalence of the constants in \eqref{rfreud} and \eqref{jackson}.
 \begin{theorem}\label{thm1}
For any  $p \in (1, \infty)$, there exist finite positive constants $C_{p}, c_{p}$ such that 
\begin{align}
c_{p} F(n,p')  \leq J(n,p) \leq C_{p} F(n,p')
\end{align}
 holds for any $n \geq 1$, where $p' \stackrel{\mathrm{def}}{=} \frac{p}{p-1}$ is the dual exponent to $p$. 
 \end{theorem}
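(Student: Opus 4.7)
The plan is a duality argument exploiting the $L^{p}$-$L^{p'}$ pairing under $d\gamma_{n}$, Gaussian integration by parts, and Meyer's inequality for the vectorial Gaussian Riesz transform $R := \nabla L^{-1/2}$, which is $L^{q}(d\gamma_{n})$-bounded for $1 < q < \infty$. The key identities I would use are the adjoint formula $\langle \partial_{j} f, g\rangle = \langle f, \partial_{j}^{*} g\rangle$ with $\partial_{j}^{*} g = x_{j} g - \partial_{j} g$, and $\nabla^{*} \nabla = L$ on mean-zero polynomials.

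For the upper bound $J(n,p) \le C_{p} F(n, p')$, I would fix a polynomial $g$ and use Hahn--Banach and $L^{p}$-$L^{p'}$ duality to write
\[
\inf_{\varphi \in \mathcal{P}^{\le d}} \|g - \varphi\|_{L^{p}(d\gamma_{n})} = \sup \bigl\{|\langle g, h\rangle| : h \in L^{p'}(d\gamma_{n}),\ \|h\|_{L^{p'}} \le 1,\ h \perp \mathcal{P}^{\le d}\bigr\}.
\]
Since orthogonality to $\mathcal{P}^{\le d}$ means the Hermite coefficients $\hat h(\alpha)$ vanish for $|\alpha| \le d$, and Hermite polynomials are dense in $L^{p'}$, I may restrict to $h \in \mathcal{P}^{\ge d+1}$. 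Setting $u := L^{-1} h \in \mathcal{P}^{\ge d+1}$, Gaussian integration by parts gives $\langle g, h\rangle = \langle \nabla g, \nabla u\rangle \le \|\nabla g\|_{L^{p}} \|\nabla u\|_{L^{p'}}$. To control $\|\nabla u\|_{L^{p'}} = \|R L^{-1/2} h\|_{L^{p'}}$, I would apply Meyer's inequality, then use the Freud inequality applied to $v := L^{-1/2} h \in \mathcal{P}^{\ge d+1}$, and apply Meyer once more:
\[
\|\nabla u\|_{L^{p'}} \le C_{p} \|v\|_{L^{p'}} \le \frac{C_{p} F(n, p')}{\sqrt{d+1}} \|\nabla v\|_{L^{p'}} = \frac{C_{p} F(n, p')}{\sqrt{d+1}} \|R h\|_{L^{p'}} \le \frac{C_{p}^{2} F(n, p')}{\sqrt{d+1}} \|h\|_{L^{p'}}.
\]
Taking the supremum over $h$ yields the Jackson bound with constant $C_{p} F(n, p')$.

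For the reverse inequality $J(n,p) \ge c_{p} F(n, p')$, I would first dualize Jackson via Hahn--Banach. The functional $\nabla g \mapsto \langle g, h\rangle$ on polynomial gradient vector fields is well-defined because $h$ is mean-zero, and Jackson says it has operator norm at most $J(n,p)/\sqrt{d}\cdot\|h\|_{L^{p'}}$ with respect to the $L^{p}$ norm on gradients; extending it to all $L^{p}$ vector fields produces a $\vec\psi \in L^{p'}$ with $\nabla^{*} \vec\psi = h$ and $\|\vec\psi\|_{L^{p'}} \le J(n,p)/\sqrt{d} \cdot \|h\|_{L^{p'}}$. Applying this with $h = f \in \mathcal{P}^{\ge d+1}$ and pairing $\vec\psi$ against the $L^{p}$-dual $g := |f|^{p'-2} \bar f/\|f\|_{L^{p'}}^{p'-1}$ (so that $\|g\|_{L^{p}} = 1$ and $\langle g, f\rangle = \|f\|_{L^{p'}}$), integration by parts yields
\[
\|f\|_{L^{p'}} = \langle g, \nabla^{*} \vec\psi\rangle = \langle \nabla g, \vec\psi\rangle \le \|\nabla g\|_{L^{p}} \cdot \frac{J(n,p)}{\sqrt{d}}\,\|f\|_{L^{p'}}.
\]
The chain rule bounds $|\nabla g| \le C_{p} |f|^{p'-2} |\nabla f|/\|f\|_{L^{p'}}^{p'-1}$, and a Hölder inequality with conjugate exponents $p'/((p'-2)p)$ and $p'/p$, both $\ge 1$ when $p' \ge 2$, gives $\|\nabla g\|_{L^{p}} \le C_{p} \|\nabla f\|_{L^{p'}}/\|f\|_{L^{p'}}$, from which Freud follows. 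The singular case $1 < p' < 2$, where $|f|^{p'-2}$ blows up on $\{f = 0\}$, can be handled via the standard regularization $g \to (|f|^{2} + \varepsilon)^{(p'-2)/2} \bar f$ and passage to $\varepsilon \to 0^{+}$.

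The main obstacle is executing the chain of Riesz transform estimates in the first direction while preserving the tail structure of the intermediate function $v$, and carefully justifying the Hölder step in the reverse direction when $p' < 2$; both rely crucially on the $L^{q}$-boundedness of $R$ across all intermediate $q \in (1, \infty)$.
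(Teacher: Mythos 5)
Your first half (the bound $J(n,p)\leq C_p F(n,p')$) is essentially the paper's argument: Hahn--Banach duality for the best approximation, reduction of the supremum from the annihilator of $\mathcal{P}^{\leq d}$ to $\mathcal{P}^{\geq d+1}$ (this reduction is exactly the content of the paper's Lemma on $(\mathcal{P}^{\leq d})^{\perp}=\mathrm{cl}_{L^{p'}}(\mathcal{P}^{\geq d+1})$, which you assert with a one-line density remark --- correct, but it deserves the short finite-rank projection argument), then integration by parts against $u=L^{-1}h$ and two applications of Meyer's inequality sandwiching one application of \eqref{rfreud} to $v=L^{-1/2}h\in\mathcal{P}^{\geq d+1}$. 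This is sound and yields $J(n,p)\leq M_{p'}^2F(n,p')$.

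The second half (the bound $c_pF(n,p')\leq J(n,p)$) departs from the paper and has a genuine gap when $p>2$, i.e.\ $p'<2$. Your plan is to pair the weak solution $\vec\psi$ of $\nabla^*\vec\psi=f$ against the pointwise dual element $g_0=|f|^{p'-2}f/\|f\|_{L^{p'}}^{p'-1}$ (note also the conjugation: with the paper's pairing $\langle g,f\rangle=\int g\bar f\,d\gamma_n$ the dual element is $|f|^{p'-2}f$, not $|f|^{p'-2}\bar f$) and to control $\|\nabla g_0\|_{L^p}$ by the chain rule and H\"older. For $p'\geq2$ this works, but for $p'<2$ the exponent $p'-2$ is negative and the quantity you must bound is typically \emph{infinite}: near a simple real zero of $f$ one has $|f|^{(p'-2)p}|\nabla f|^p\sim\mathrm{dist}^{(p'-2)p}$, which is non-integrable as soon as $(2-p')p\geq1$, i.e.\ $p\geq(3+\sqrt5)/2$ (take $f=H_2$ in one variable). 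Hence the regularization $g_\varepsilon=(|f|^2+\varepsilon)^{(p'-2)/2}f$ cannot rescue the argument: $\|\nabla g_\varepsilon\|_{L^p}\to\infty$ as $\varepsilon\to0$, so the final inequality becomes vacuous; and even for $2<p<(3+\sqrt5)/2$ your stated H\"older exponents $p'/((p'-2)p)$ and $p'/p$ are inadmissible ($<1$). A secondary point: applying the identity $\langle\nabla g,\vec\psi\rangle=\langle g,f\rangle$, established only against polynomial test functions, to the non-polynomial $g_0$ requires density of polynomials in the Gaussian Sobolev space $W^{1,p}(d\gamma_n)$, which you should invoke explicitly. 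The paper avoids all of this by staying dual throughout: from Jackson, $|\langle g,f\rangle|\leq\frac{J(n,p)}{\sqrt d}\|\nabla g\|_{L^p}\|f\|_{L^{p'}}$ for all polynomials $g$; Meyer converts $\|\nabla g\|_{L^p}$ to $\|L^{1/2}g\|_{L^p}$, one substitutes $g=L^{-1/2}(G-\mathbb{E}G)$ for an arbitrary polynomial $G$, moves $L^{-1/2}$ onto $f$ by self-adjointness, and takes the supremum over $G$ (polynomials being dense in $L^p(d\gamma_n)$) to get $\|L^{-1/2}f\|_{L^{p'}}\lesssim\frac{J(n,p)}{\sqrt d}\|f\|_{L^{p'}}$, hence $\|f\|_{L^{p'}}\lesssim\frac{J(n,p)}{\sqrt d}\|L^{1/2}f\|_{L^{p'}}\lesssim\frac{J(n,p)}{\sqrt d}\|\nabla f\|_{L^{p'}}$; this works uniformly for all $p\in(1,\infty)$ and is the natural repair of your second half.
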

 
 Combining Theorem \ref{thm1} with a classical inequality of Freud \cite{Fr1} from weighted approximation theory, we deduce the following consequences.
 
 \begin{corollary}\label{sol1}
 For any $p \in (1, \infty)$, there exist $S_{p}, T_p<\infty$ such that the estimates
 \begin{align}\label{naor}
\sqrt{d}\| f\|_{L^{p}(d\gamma_{1})} \leq S_{p} \|f'\|_{L^{p}(d\gamma_{1})}  \quad \mbox{and} \quad d \| f\|_{L^{p}(d\gamma_{1})} \leq T_{p} \|Lf\|_{L^{p}(d\gamma_{1})}
 \end{align}
 hold for all $f \in \mathcal{P}^{\geq d}$ and any $d\geq 1$. 
 \end{corollary}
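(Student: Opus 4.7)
The plan is to deduce both estimates of \eqref{naor} from Theorem \ref{thm1} combined with two classical tools: Freud's weighted Jackson inequality (for the first), and Meyer's equivalence of the Riesz-type norms $\|h'\|_{L^p(d\gamma_1)}$ and $\|L^{1/2}h\|_{L^p(d\gamma_1)}$ (for the second).

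For the first estimate, Freud's theorem provides, for every $p \in (1, \infty)$, a constant $C_p < \infty$ such that
\[
\inf_{\varphi \in \mathcal{P}^{\leq d}} \|g - \varphi\|_{L^p(d\gamma_1)} \leq \frac{C_p}{\sqrt{d}} \|g'\|_{L^p(d\gamma_1)}
\]
for every polynomial $g : \mathbb{R} \to \mathbb{C}$ and every $d \geq 1$. In the notation of the definition preceding Theorem \ref{thm1} this reads $J(1, p) \leq C_p < \infty$, so applying Theorem \ref{thm1} with $p$ replaced by $p'$ yields $F(1, p) \leq c_{p'}^{-1} J(1, p') < \infty$. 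The defining inequality \eqref{rfreud} for $F(1, p)$ is then exactly the first estimate of \eqref{naor}, with $S_p := F(1, p)$.

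For the second estimate I would bootstrap the first one via Meyer's inequality: for every $p \in (1, \infty)$ there exists $b_p < \infty$ such that $\|h'\|_{L^p(d\gamma_1)} \leq b_p \|L^{1/2} h\|_{L^p(d\gamma_1)}$ for all mean-zero polynomials $h$, where $L^{1/2}$ is defined spectrally by $L^{1/2} H_k = \sqrt{k}\,H_k$. The key observation is that $L^{1/2}$ preserves each tail space $\mathcal{P}^{\geq d}$. Applying the first estimate to $f \in \mathcal{P}^{\geq d}$ and combining with Meyer gives
\[
\sqrt{d}\,\|f\|_{L^p(d\gamma_1)} \leq S_p \|f'\|_{L^p(d\gamma_1)} \leq S_p b_p \|L^{1/2} f\|_{L^p(d\gamma_1)},
\]
while the same chain applied to $L^{1/2} f \in \mathcal{P}^{\geq d}$ yields $\sqrt{d}\,\|L^{1/2} f\|_{L^p(d\gamma_1)} \leq S_p b_p \|L f\|_{L^p(d\gamma_1)}$. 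Multiplying these two bounds produces $d\,\|f\|_{L^p(d\gamma_1)} \leq (S_p b_p)^2 \|Lf\|_{L^p(d\gamma_1)}$, which is the second estimate of \eqref{naor} with $T_p := (S_p b_p)^2$.

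The substantive content of the corollary lies in Theorem \ref{thm1}, which trades the classical Jackson-type inequality defining $J(1, p)$ for the reverse-type inequality defining $F(1, p)$; once Theorem \ref{thm1} and Freud's inequality are in hand, passing from the gradient estimate to the $L$-estimate is a one-step bootstrap through the spectral square root $L^{1/2}$ and Meyer's theorem, both of which are standard in the $L^p$ theory of the Ornstein--Uhlenbeck semigroup for $p \in (1, \infty)$.
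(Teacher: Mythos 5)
Your proof is correct and follows essentially the same route as the paper: Freud's weighted Jackson inequality gives $J(1,p')<\infty$, Theorem~\ref{thm1} (applied with the dual exponent) converts this into the gradient bound with $S_p=F(1,p)$, and the $L$-bound follows by combining with Meyer's inequality and iterating through $L^{1/2}$, which preserves the tail spaces --- exactly the paper's argument. The only details you gloss over, which the paper spells out, are routine: rescaling Freud's inequality from the weight $e^{-x^2/2}$ with Lebesgue measure to the Gaussian probability measure, extending it from real- to complex-valued polynomials, and treating $d=1$ (Freud's estimate is used for $d\geq2$) via the Poincar\'e inequality.
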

 
The first inequality in \eqref{naor} is the converse to Freud's Bernstein--Markov inequality with Gaussian weight \cite{freud01} and the second inequality answers the Gaussian analogue of Mendel and Naor's question on the real line. In the next theorem we show that the same bounds hold true for analytic polynomials in an arbitrary dimension, in particular resolving the problem of \cite{MN1} for such polynomials.
 
 \begin{theorem}\label{anal1}
 For any $n\geq 1$,  $p\geq 1$ and any analytic $f : \mathbb{C}^{n} \to \mathbb{C}$ with $f \in \mathcal{P}^{\geq d}$, we have 
 \begin{align}\label{naoran}
 d \| f \|_{L^{p}(d\gamma_{2n})} \leq \| L f\|_{L^{p}(d\gamma_{2n})},
 \end{align}
 where $d\gamma_{2n}$ is the standard gaussian measure on $ \mathbb{C}^{n}\cong\mathbb{R}^{2n}$. Moreover, for any \mbox{$p\in(1,\infty)$}, there exists a finite constant $K_p$ such that for any $n\geq1$, we have 
 \begin{align}\label{gradan}
 \sqrt{d} \|f\|_{L_p(d\gamma_{2n})} \leq K_p \|\nabla f\|_{L_p(d\gamma_{2n})}
 \end{align}
 for any analytic polynomial $f : \mathbb{C}^{n} \to \mathbb{C}$ with $f \in \mathcal{P}^{\geq d}$.
 \end{theorem}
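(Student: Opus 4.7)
My plan is to first establish \eqref{naoran} via a radial subharmonicity argument and then bootstrap to \eqref{gradan} using the Ornstein--Uhlenbeck semigroup together with Meyer's classical Riesz transform inequality.

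For \eqref{naoran}, the core step is a monotonicity of the spherical $L^p$-averages of $f$. Writing $z=rw$ with $r=|z|$ and $w\in S^{2n-1}$, set $M_p(f;r):=\bigl(\int_{S^{2n-1}}|f(rw)|^p\,d\nu(w)\bigr)^{1/p}$ with $\nu$ the normalized surface measure on $S^{2n-1}$. The claim is that $r\mapsto M_p(f;r)/r^d$ is non-decreasing: for each fixed $w$ the function $\zeta\mapsto f(\zeta w)/\zeta^d$ is entire on $\mathbb{C}$ (since $f$ has holomorphic minimum degree $d$), hence $|f(\zeta w)/\zeta^d|^p$ is subharmonic in $\zeta$; integrating against the rotation-invariant measure $\nu$ yields a non-negative, rotationally symmetric subharmonic function of $\zeta\in\mathbb{C}$, which is automatically a non-decreasing function of $|\zeta|$. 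On the other hand, the chain rule $\partial_r f(rw)=w\cdot(\partial_{z_1}f,\ldots,\partial_{z_n}f)(rw)$ together with the Euler-type identity $Lf(z)=\sum_{j}z_j\partial_{z_j}f(z)$ (which holds because $\Delta f=0$ for holomorphic $f$) yield $r\partial_r f(rw)=Lf(rw)$, whence $r\partial_r|f(rw)|^p=p|f|^{p-2}\Re(\bar f\cdot Lf)|_{rw}$; averaging over $S^{2n-1}$ and applying H\"older's inequality gives $r\partial_r M_p(f;r)^p\leq pM_p(f;r)^{p-1}M_p(Lf;r)$. The monotonicity provides the complementary bound $r\partial_r M_p(f;r)^p\geq pdM_p(f;r)^p$, and combining the two yields the pointwise estimate $dM_p(f;r)\leq M_p(Lf;r)$; integrating against the radial Gaussian weight $r^{2n-1}e^{-r^2/2}\,dr$ then gives \eqref{naoran}.

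The same monotonicity immediately gives $M_p(f;\lambda r)\leq\lambda^d M_p(f;r)$ for any $\lambda\in[0,1]$, which after integration yields $\|f(\lambda\cdot)\|_{L^p(d\gamma_{2n})}\leq\lambda^d\|f\|_{L^p(d\gamma_{2n})}$. Since the Ornstein--Uhlenbeck semigroup acts on holomorphic polynomials by dilation, $P_t f(z)=f(e^{-t}z)$ (verified on monomials via $P_t z^\alpha=e^{-t|\alpha|}z^\alpha$ and extended by linearity), this upgrades to the full exponential decay $\|P_t f\|_{L^p(d\gamma_{2n})}\leq e^{-td}\|f\|_{L^p(d\gamma_{2n})}$ for every $p\geq 1$ and every $t\geq 0$. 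Applying the subordination formula $L^{-1/2}=\pi^{-1/2}\int_{0}^{\infty}t^{-1/2}P_t\,dt$ and Minkowski's integral inequality then gives $\|L^{-1/2}g\|_{L^p(d\gamma_{2n})}\leq\|g\|_{L^p(d\gamma_{2n})}/\sqrt{d}$ for any analytic $g\in\mathcal{P}^{\geq d}$; applied to $g=L^{1/2}f$ this becomes $\sqrt{d}\,\|f\|_{L^p(d\gamma_{2n})}\leq\|L^{1/2}f\|_{L^p(d\gamma_{2n})}$. Chaining with Meyer's classical dimension-free Riesz transform inequality $\|L^{1/2}f\|_p\leq K_p\|\nabla f\|_p$ for $p\in(1,\infty)$ (applicable here since $f\in\mathcal{P}^{\geq d}$ has mean zero) gives \eqref{gradan}.

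The main technical hurdle is the radial subharmonicity in the first paragraph; once this is in place, the bootstrap to the gradient inequality becomes a short application of well-known semigroup and Riesz transform tools. A minor secondary point is the endpoint $p=1$ in \eqref{naoran}, where the pointwise identity $r\partial_r|f|^p=p|f|^{p-2}\Re(\bar f Lf)$ is slightly delicate at the zero set $\{f=0\}$; this can be handled either by restricting to the open set $\{f\neq 0\}$ (whose complement has Lebesgue measure zero for nonzero polynomials) or simply by approximation from $p>1$.
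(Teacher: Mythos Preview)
Your argument is correct. The derivation of \eqref{gradan} from heat smoothing via the subordination formula and Meyer's Riesz transform bound is exactly what the paper does. The difference lies in how the heat-smoothing bound $\|P_tf\|_p\le e^{-td}\|f\|_p$ (equivalently, your monotonicity of $M_p(f;r)/r^d$) is obtained. The paper works on the circle: it builds, via Hahn--Banach and Riesz representation, a complex measure $\mu$ on $\mathbb{T}$ with $\int w^k\,d\mu=e^{-tk}$ for $d\le k\le m$ and $\|\mu\|_{TV}=e^{-td}$ (the norm computation being the classical Schwarz lemma for $p(w)/w^d$ on the disk), then writes $P_tf(z)=\int_{\mathbb{T}} f(wz)\,d\mu(w)$ and invokes the complex rotation invariance of $\gamma_{2n}$; both \eqref{naoran} and the $L^{1/2}$-bound then follow by integrating in $t$. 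You instead average the subharmonic function $|f(\zeta w)/\zeta^d|^p$ over $w\in S^{2n-1}$ to obtain a radial subharmonic function of $\zeta\in\mathbb{C}$, read off the monotonicity, and combine it with the differential identity $r\partial_r f(rw)=Lf(rw)$ to get the stronger \emph{pointwise} spherical inequality $d\,M_p(f;r)\le M_p(Lf;r)$, from which \eqref{naoran} follows without any $t$-integration. The analytic core is the same Schwarz-lemma observation for $f(\zeta w)/\zeta^d$; your packaging is more elementary and yields extra pointwise information on spheres, while the paper's multiplier-measure formulation is more portable (it is reused verbatim, with Bernstein's inequality in place of the Schwarz lemma, to prove the companion bound $\|Lf\|_p\le d\|f\|_p$ for low-degree analytic polynomials).
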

 We remark that the dependence on $d$ of the estimates appearing in Corollary~\ref{sol1} and Theorem~\ref{anal1} are asymptotically optimal, as can be seen by choosing the Hermite polynomial $f(x)=H_d(x)$ in \eqref{naor} and $f(z)=z_1^d$ in \eqref{naoran} and \eqref{gradan}.
 
Using similar techniques as in the proof of Theorem \ref{anal1} we obtain sharp dimension-free Bernstein--Markov inequalities for analytic polynomials in Gauss space.

\begin{theorem}\label{analgrad}
 For any $p \in (1, \infty)$, there exists a finite constant $C_p$ such that for any $n\geq1$, we have
 \begin{align}\label{naoran2}
 \| \nabla f \|_{L^{p}(d\gamma_{2n})} \leq C_{p} \sqrt{d} \| f\|_{L^{p}(d\gamma_{2n})}
 \end{align}
for any analytic polynomial $f : \mathbb{C}^{n} \to \mathbb{C}$ with $f \in \mathcal{P}^{\leq d}$.
\end{theorem}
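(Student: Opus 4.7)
My plan is to follow the same semigroup-plus-dilation strategy as in Theorem~\ref{anal1}, combined with a vectorial Hölder step that absorbs the dimension. Since $f$ is analytic, the Cauchy--Riemann equations give $|\nabla f|^{2} = 2 \sum_{j=1}^{n} |\partial_{z_{j}} f|^{2}$, so it suffices to bound the $L^{p}$ norm of $|\nabla_{z} f| = (\sum_{j} |\partial_{z_{j}} f|^{2})^{1/2}$. Using the vanishing of complex-Gaussian moments $\mathbb{E}_{\zeta \sim \gamma_{2n, \epsilon}}[\zeta^{\alpha}] = 0$ for $\alpha \neq 0$ and $\mathbb{E}[\overline{\zeta_{j}}\, \zeta^{\alpha}] = 2 \epsilon^{2}\, \delta_{\alpha, e_{j}}$, one obtains the reproducing identity
\begin{equation*}
\partial_{z_{j}} f(z) \;=\; \frac{1}{2 \epsilon^{2}}\, \mathbb{E}_{\zeta \sim \gamma_{2n, \epsilon}}\!\bigl[\overline{\zeta_{j}}\, f(z + \zeta)\bigr], \qquad j = 1, \ldots, n,
\end{equation*}
valid for every $\epsilon > 0$.

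The crucial auxiliary inequality is a dilation bound dual to the one behind Theorem~\ref{anal1}\eqref{naoran}: for analytic $f \in \mathcal{P}^{\leq d}$ and every $s \geq 1$,
\begin{equation}\label{dilation}
\|f(s \cdot)\|_{L^{p}(d\gamma_{2n})} \;\leq\; s^{d}\, \|f\|_{L^{p}(d\gamma_{2n})}.
\end{equation}
I would prove \eqref{dilation} by slicing: for every unit $\omega \in \mathbb{C}^{n}$, the function $g_{\omega}(\zeta) := f(\zeta \omega)$ is a one-variable analytic polynomial of degree $\leq d$, and the reversed polynomial $w \mapsto w^{d} g_{\omega}(1/w)$ is analytic of degree $\leq d$. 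Subharmonicity of its $p$-th power yields that $\rho \mapsto \rho^{-dp} \int_{0}^{2\pi} |g_{\omega}(\rho e^{i\phi})|^{p}\, \tfrac{d\phi}{2\pi}$ is non-increasing in $\rho$. Averaging over $\omega \in S^{2n-1}$ via the $U(1)$-rotational invariance of the Gaussian measure and passing to polar coordinates relative to $\gamma_{2n, s}$ reduces \eqref{dilation} to monotonicity of a one-dimensional integral in the radius.

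With \eqref{dilation} established, the proof is completed by a Hölder step which is dimension-free because the auxiliary random variable enters only through a scalar pairing. Using $|\nabla_{z} f(z)| = \sup_{|u| = 1,\, u \in \mathbb{C}^{n}} \bigl|\sum_{j} \overline{u_{j}}\, \partial_{z_{j}} f(z)\bigr|$ together with the reproducing identity,
\begin{equation*}
{\textstyle \sum_{j}}\, \overline{u_{j}}\, \partial_{z_{j}} f(z) \;=\; \frac{1}{2 \epsilon^{2}}\, \mathbb{E}_{\zeta}\bigl[\overline{u \cdot \zeta}\, f(z + \zeta)\bigr].
\end{equation*}
The crucial point is that for $|u| = 1$ the scalar $u \cdot \zeta = \sum_{j} u_{j} \zeta_{j}$ is a complex Gaussian of variance $2\epsilon^{2}$, whose moments are independent of $n$; Hölder's inequality in $\zeta$ then gives $|\nabla_{z} f(z)|^{p} \leq C_{p}\, \epsilon^{-p}\, \mathbb{E}_{\zeta}[|f(z + \zeta)|^{p}]$ with $C_{p}$ depending only on $p$. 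Integrating in $z \sim \gamma_{2n}$, noting that $z + \zeta \sim \gamma_{2n, \sqrt{1 + \epsilon^{2}}}$, and invoking \eqref{dilation} with $s = \sqrt{1 + \epsilon^{2}}$ produces $\|\nabla_{z} f\|_{p}^{p} \leq C_{p}\, \epsilon^{-p}\, (1 + \epsilon^{2})^{dp/2}\, \|f\|_{p}^{p}$; the choice $\epsilon = 1/\sqrt{d}$ gives $(1 + 1/d)^{d/2} \leq e^{1/2}$ and therefore $\|\nabla f\|_{p} \leq C_{p}'\, \sqrt{d}\, \|f\|_{p}$. The main obstacle I expect is a careful proof of \eqref{dilation} in arbitrary complex dimension, where slicing and polar integration have to play nicely with subharmonicity and spherical averaging; the dimension-freeness of the final constant is bought precisely by the vectorial Hölder step (testing against $\sup_{|u| = 1}$) rather than summing coordinate-by-coordinate estimates whose Gaussian norms would scale like $\sqrt{n}$.
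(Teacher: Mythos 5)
Your argument is correct, and it takes a genuinely different route from the paper. The paper first proves the sharp Bernstein inequality for the generator, $\|Lf\|_{L^p(d\gamma_{2n})}\le d\,\|f\|_{L^p(d\gamma_{2n})}$ (Lemma~\ref{bern1}), via a Hahn--Banach/Riesz-representation duality on $C(\mathbb{T})$ together with the classical Bernstein inequality for trigonometric polynomials, and then transfers this to the gradient using the interpolation-type bound \eqref{lust} from \cite{EI1} and Meyer's dimension-free Riesz transform estimates \eqref{riesz}. You bypass all of this machinery: your three ingredients are (i) a Gaussian reproducing formula $\partial_{z_j}f(z)=\tfrac{1}{2\epsilon^2}\,\mathbb{E}\bigl[\overline{\zeta_j}\,f(z+\zeta)\bigr]$, (ii) a H\"older step made dimension-free by testing against unit vectors $u\in\mathbb{C}^n$, so that only the scalar complex Gaussian $u\cdot\zeta$ (with $n$-independent moments) appears, and (iii) the dilation bound $\|f(s\cdot)\|_{L^p(d\gamma_{2n})}\le s^d\|f\|_{L^p(d\gamma_{2n})}$ for $s\ge1$, which you prove by slicing along complex lines and using monotonicity of circular means of $w\mapsto w^d g_\omega(1/w)$, followed by $U(1)$-invariant spherical averaging; the choice $\epsilon=1/\sqrt{d}$ then yields the claim with an explicit constant, and the Cauchy--Riemann identity $|\nabla f|^2=2\sum_j|\partial_{z_j}f|^2$ converts the complex gradient bound into \eqref{naoran2}. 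Note that your dilation bound is exactly inequality \eqref{prejan}, which the paper obtains from Lemma~\ref{bern1} in the proof of Theorem~\ref{thm:mom}, so you could also quote it; your subharmonicity proof is an independent and more elementary derivation of it. What each route buys: the paper's proof produces, along the way, the sharp constant-$d$ Bernstein inequality for $L$ and leans on deep but standard dimension-free harmonic-analysis tools, while yours is self-contained (no Riesz transforms, no square-function lemma), gives explicit constants of the form $c\,\kappa_{p'}$ with $\kappa_{p'}$ the $L^{p'}$-norm of a standard complex Gaussian, and works pointwise in $z$; it does require $p>1$ through the H\"older step (finite dual exponent), matching the stated range. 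The only presentational gap is that the proof of the dilation bound is sketched rather than written out, but the sketch is sound and completes without difficulty.
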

The estimate \eqref{naoran2} is asymptotically sharp as can be seen by choosing $f(z)=z_1^d$. Theorem~\ref{naoran2}  extends Freud's inequality~\cite{freud01} to higher dimensions with the sharp dependence on $d$  for analytic polynomials. We refer the interested reader to \cite{EI1}, where  the dimension independent estimate $\| \nabla f\|_{L^{p}(d\gamma_{n})} \leq C_{p} d^{\alpha_{p}} \|f\|_{L^{p}(d\gamma_{n})}$ was obtained with $\alpha_{p} \in [1,2)$ and all (not just \mbox{analytic) polynomials $f$ of degree at most $d$.}

Finally, we derive the following sharp moment comparison result for analytic polynomials in Gauss space (see also Remark \ref{rem:mom}).

\begin{theorem} \label{thm:mom}
For any $0<p<q<\infty$ with $q\geq1$, $n\geq1$ and $d\geq1$, we have
\begin{equation} \label{eq:mom}
\| f\|_{L^{q}(d\gamma_{2n})} \leq \left(\frac{q}{p}\right)^{d/2} \| f\|_{L^{p}(d\gamma_{2n})}
\end{equation}
for any analytic polynomial $f : \mathbb{C}^{n} \to \mathbb{C}$ with $f \in \mathcal{P}^{\leq d}$.
\end{theorem}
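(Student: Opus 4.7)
The plan is to combine Janson's hypercontractive inequality for holomorphic functions on Gauss space with a homogenization/plurisubharmonicity argument.

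First I would invoke Janson's hypercontractive inequality: for any analytic $g:\mathbb{C}^{n}\to\mathbb{C}$ and any $0 < p \leq q < \infty$,
\[
\|e^{-tL} g\|_{L^{q}(d\gamma_{2n})} \;\leq\; \|g\|_{L^{p}(d\gamma_{2n})} \quad \text{whenever } e^{-2t} \leq p/q.
\]
Since $L$ acts on analytic polynomials as the Euler operator $\sum z_{j}\partial_{z_{j}}$, every monomial $z^{\alpha}$ is an eigenfunction with eigenvalue $|\alpha|$, and hence $e^{-tL} g(z) = g(e^{-t}z)$ for every analytic polynomial $g$. Applying Janson's inequality to $g(z) = f(\sqrt{q/p}\, z)$ (which is itself an analytic polynomial of degree $\leq d$) at the boundary time $e^{-t} = \sqrt{p/q}$ gives $e^{-tL} g = f$, and therefore
\[
\|f\|_{L^{q}(d\gamma_{2n})} \;\leq\; \|f(\sqrt{q/p}\,\cdot)\|_{L^{p}(d\gamma_{2n})}.
\]

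The theorem then reduces to the scaling estimate $\|f(\mu \cdot)\|_{L^{p}(d\gamma_{2n})} \leq \mu^{d} \|f\|_{L^{p}(d\gamma_{2n})}$ for every $\mu \geq 1$. To prove this I would introduce an auxiliary complex variable $w$ and set $\tilde f(z, w) = w^{d} f(z/w)$, an analytic polynomial on $\mathbb{C}^{n+1}$ which is homogeneous of degree $d$. Since $f(\mu z) = \mu^{d} \tilde f(z, 1/\mu)$, the scaling estimate is equivalent to the statement that
\[
\phi(w) \;:=\; \int_{\mathbb{C}^{n}} |\tilde f(z, w)|^{p}\, d\gamma_{2n}(z), \qquad w \in \mathbb{C},
\]
is non-decreasing in $|w|$ on $[0,1]$.

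Two ingredients yield this monotonicity. The $d$-homogeneity of $\tilde f$ combined with the unitary map $z \mapsto e^{i\theta} z$ (which preserves $\gamma_{2n}$) gives the identity $|\tilde f(e^{i\theta} z, w)| = |\tilde f(z, e^{-i\theta} w)|$ for all $\theta \in \mathbb{R}$, so that $\phi(w)$ depends only on $|w|$. For each fixed $z$, the map $w \mapsto \tilde f(z, w)$ is a polynomial in a single complex variable, so $w \mapsto |\tilde f(z, w)|^{p}$ is subharmonic on $\mathbb{C}$ and its circular averages are non-decreasing in the radius. Averaging first in $\theta$, integrating in $z$ by Fubini, and using the rotational symmetry of $\phi$ forces $r \mapsto \phi(r)$ to be non-decreasing on $[0,\infty)$. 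Chaining this with the Janson step at $\mu = \sqrt{q/p}$ delivers $\|f\|_{L^{q}(d\gamma_{2n})} \leq (q/p)^{d/2}\|f\|_{L^{p}(d\gamma_{2n})}$.

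The main obstacle I anticipate is identifying the correct homogenization. A direct plurisubharmonicity argument applied to $|f|^{p}$ only shows that $\mu \mapsto \|f(\mu\, \cdot)\|_{L^{p}(d\gamma_{2n})}$ is non-decreasing in $\mu \geq 0$, which produces a lower bound on $\|f(\mu\,\cdot)\|_{L^{p}(d\gamma_{2n})}$ and so goes in the wrong direction; introducing the auxiliary variable $w$ together with the polynomial degree $d$ is precisely what converts this monotonicity into the required upper bound $\mu^{d}\|f\|_{L^{p}(d\gamma_{2n})}$.
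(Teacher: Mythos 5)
Your proof is correct. Both your argument and the paper's factor through the same two-step skeleton: Janson's hypercontractive inequality for analytic functions, combined with a ``reverse dilation'' estimate asserting that for an analytic polynomial of degree at most $d$ one has $\|f(\mu\,\cdot)\|_{L^{r}(d\gamma_{2n})}\leq \mu^{d}\|f\|_{L^{r}(d\gamma_{2n})}$ for $\mu\geq1$. Where you genuinely diverge is in how this dilation estimate is obtained and on which side of Janson it is applied. The paper proves it in $L^{q}$ (with $q\geq1$) by expanding $f=e^{tL}e^{-tL}f$ as the exponential series $\sum_m \frac{(tL)^m e^{-tL}f}{m!}$ and invoking its Bernstein-type Lemma~\ref{bern1}, $\|Lf\|_{L^{q}}\leq d\|f\|_{L^{q}}$, which is itself established by a Hahn--Banach/Riesz-representation argument on $C(\mathbb{T})$ together with Bernstein's trigonometric inequality; this route needs the triangle inequality, hence $q\geq1$ at that step, and it is exactly the mechanism (Bernstein implies Nikol'skii) highlighted in Remark~\ref{rem:mom}. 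You instead apply Janson first, reducing to the dilation bound in $L^{p}$ (where possibly $p<1$), and prove it by homogenizing, $\tilde f(z,w)=w^{d}f(z/w)$, exploiting the rotation identity $|\tilde f(e^{i\theta}z,w)|=|\tilde f(z,e^{-i\theta}w)|$ and the rotation invariance of $\gamma_{2n}$ to see that $\phi(w)=\int|\tilde f(z,w)|^{p}\,d\gamma_{2n}(z)$ is radial, and then using subharmonicity of $|g|^{p}$ ($p>0$) in one complex variable so that circular means increase with the radius; all steps (including Tonelli and the monotone passage to the limit) are valid for every $p>0$. Your route is more elementary and self-contained, needs neither the operator $L$ nor Lemma~\ref{bern1}, and only imports $q\geq1$ through the quoted form of Janson's theorem rather than through the argument itself; the paper's route, by contrast, recycles Lemma~\ref{bern1} (needed anyway for Theorem~\ref{analgrad}) and makes transparent the general principle that a Bernstein inequality for the generator of a hypercontractive semigroup yields such moment comparisons, which is what allows the extension to the torus case of Defant--Masty\l o mentioned in Remark~\ref{rem:mom}.
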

 
\section{Proofs}
\subsection{Proof of Theorem~\ref{thm1}}
Fix $p\in(1,\infty)$. First we show the existence of a $c_{p}>0$ such that $c_{p} F(n,p') \leq J(p,n)$. By the Hahn--Banach theorem, for every $g\in L^p(d\gamma_n)$,
\begin{align} \label{hahnba} 
\inf_{\varphi \in \mathcal{P}^{\leq d}} \| g-\varphi\|_{L^{p}(d\gamma_{n})} = \sup_{\substack{f \in (\mathcal{P}^{\leq d})^{\perp}\\ \|f\|_{L^{p'}(d\gamma_{n})} \leq 1}}  |\langle g, f \rangle|,
\end{align}
where the annihilator of the subspace $\mathcal{P}^{\leq d} \subseteq L^{p}(d\gamma_{n})$ is given by 
\begin{align*}
(\mathcal{P}^{\leq d})^{\perp} \stackrel{\mathrm{def}}{=} \big\{ f \in L^{p'}(d\gamma_{n})\; :\; \langle f,\varphi \rangle =0 \quad \text{for all} \quad \varphi \in \mathcal{P}^{\leq d}\big\}.
\end{align*}
Clearly $\mathcal{P}^{\geq d+1} \subseteq (\mathcal{P}^{\leq d})^{\perp}$. Thus, the estimate \eqref{jackson} implies the validity of 
\begin{align}\label{firstred}
 |\langle g, f \rangle|  \leq \frac{J(n,p)}{\sqrt{d}} \|\nabla g\|_{L^{p}(d\gamma_{n})} \|f\|_{L^{p'}(d\gamma_{n})} 
\end{align}
for all polynomials $g$ and all $f \in \mathcal{P}^{\geq d+1}$. By Meyer's two-sided dimension-free bounds for the Riesz transform in Gauss space (see~\cite{Me1}),  there exist two finite positive constants $m_{p}$ and  $M_{p}$ such that 
\begin{align}\label{riesz}
m_{p} \| L^{1/2} g \|_{L^{p}(d\gamma_{n})}  \leq \| \nabla g \|_{L^{p}(d\gamma_{n})} \leq M_{p}\| L^{1/2} g \|_{L^{p}(d\gamma_{n})},
\end{align}
where for a polynomial $g = \sum c_{\alpha} H_{\alpha}$ we have $L^{1/2} g = \sum |\alpha|^{1/2} c_{\alpha} H_{\alpha}$. 
Combining \eqref{riesz} with \eqref{firstred} we obtain the inequality
\begin{align}\label{sred}
 |\langle g, f \rangle|  \leq \frac{M_{p}J(n,p)}{\sqrt{d}} \| L^{1/2} g\|_{L^{p}(d\gamma_{n})} \|f\|_{L^{p'}(d\gamma_{n})}
\end{align}
for all polynomials $g$ and all $f \in \mathcal{P}^{\geq d+1}$. Equivalently, we can rewrite \eqref{sred} as 
\begin{align}\label{3red}
|\langle L^{-1/2}(g-\mathbb{E}g), f \rangle|  \leq \frac{M_{p}J(n,p)}{\sqrt{d}} \| g - \mathbb{E} g\|_{L^{p}(d\gamma_{n})} \|f\|_{L^{p'}(d\gamma_{n})},
\end{align}
where $\mathbb{E}g = \int_{\mathbb{R}^{n}} g d\gamma_{n}$ and $L^{-1/2} h = \sum_{|\alpha|>0} |\alpha|^{-1/2}\beta_{\alpha} H_{\alpha}$ for a polynomial  of the form $h = \sum_{|\alpha|>0} \beta_{\alpha} H_{\alpha}$. Using the fact $L^{-1/2} \mathcal{P}^{\geq d+1} = \mathcal{P}^{\geq d+1}$, the identity 
$\langle L^{-1/2}(g-\mathbb{E}g), f \rangle  = \langle g, L^{-1/2}f\rangle$,  and the  inequality $\| g-\mathbb{E}g\|_{L^{p}(d\gamma_{n})} \leq 2 \| g\|_{L^{p}(d\gamma_{n})}$,  we see that \eqref{3red} implies
\begin{align}\label{4red}
\begin{split}
|\langle g, f \rangle|  \leq \frac{2 J(n,p)M_{p}}{\sqrt{d}} &\|g\|_{L^{p}(d\gamma_{n})}  \| L^{1/2} f\|_{L^{p'}(d\gamma_{n})} 
\\ & \stackrel{\eqref{riesz}}{\leq} 
\frac{2 J(n,p)M_{p}}{m_{p'}  \sqrt{d}} \|g\|_{L^{p}(d\gamma_{n})} \| \nabla f\|_{L^{p'}(d\gamma_{n})}.
\end{split}
\end{align}
Notice that polynomials are dense in $L^{p}(d\gamma_{n})$. Indeed, by a simple iteration argument and the H\"older inequality  it is enough to study the case $n=1$. By Hahn--Banach it suffices to show that if $h \in L^{p'}(d\gamma_{1})$ is such that $\int_{\mathbb{R}} h(x)x^{k} d\gamma_{1}(x)=0$ for all $k=0,1,\ldots,$ then $h=0$. The latter follows from the fact that the Fourier transform $\widehat{w}$ of $w(x) = h(x)e^{-x^{2}/2}$ is an entire function on $\mathbb{C}$ with $\widehat{w}^{(k)}(0)=0$ for all $k \in \mathbb{Z}_{+}$.

To finish the proof of the first inequality in Theorem~\ref{thm1}, we take the supremum over all polynomials $g \in L^{p}(d\gamma_{n})$ in \eqref{4red}, use the fact that polynomials are dense in $L^{p}(d\gamma_{n})$, and the trivial inequality $\sqrt{2d} \geq \sqrt{d+1}$ to obtain 
\begin{align*}
\sqrt{d+1} \| f\|_{L^{p'}(d\gamma_{n})} \leq  \frac{J(n,p) 2\sqrt{2}M_{p}}{m_{p'}} \|\nabla f\|_{L^{p'}(d\gamma_{n})}.
\end{align*}
Thus  $c_{p} F(n,p') \leq  J(n,p)$ with $c_{p} = \frac{1}{\sqrt{8}}\frac{m_{p'}}{M_{p}}$. 

We will now show the second inequality in Theorem~\ref{thm1}, i.e., $J(n,p)\leq C_{p} F(n, p')$. Using \eqref{rfreud} and Meyer's inequalities \eqref{riesz}, we see that 
\begin{align}\label{conv1}
\sqrt{d+1} \|f\|_{L^{p'}(d\gamma_{n})} \leq F(n,p') \|\nabla f\|_{L^{p'}(d\gamma_{n})} \leq M_{p'} F(n,p') \|L^{1/2} f\|_{L^{p'}(d\gamma_{n})} 
\end{align}
holds for all $f \in \mathcal{P}^{\geq d+1}$. Equivalently, \eqref{conv1} asserts that the bound
\begin{align*}
|\langle g, L^{-1/2} f \rangle| \leq \frac{M_{p'} F(n,p')}{\sqrt{d+1}} \| f\|_{L^{p'}(d\gamma_{n})} \|g\|_{L^{p}(d\gamma_{n})}
\end{align*}
holds for all polynomials $g$ and all $f \in \mathcal{P}^{\geq d+1}$. Using the identity $\langle g,L^{-1/2}f \rangle  = \langle L^{-1/2}(g-\mathbb{E}g), f\rangle$, denoting $h = L^{-1/2}(g-\mathbb{E}g) \in \mathcal{P}^{\geq 1}$ and using Meyer's estimates \eqref{riesz} again, we obtain 
\begin{align}\label{red5}
|\langle h, f \rangle| \leq \frac{M_{p'} F(n,p')}{m_{p} \sqrt{d+1}}  \| f\|_{L^{p'}(d\gamma_{n})}  \| \nabla h\|_{L^{p}(d\gamma_{n})}
\end{align}
for all $f \in \mathcal{P}^{\geq d+1}$ and all polynomials $h$ (since $\nabla h = \nabla (h+\mathbb{E} h)$ and $\langle h,f\rangle = \langle h+\mathbb{E}h,f\rangle$). We will need the following lemma.

\begin{lemma}\label{clos}
Let $p\in (1, \infty)$, $d\geq1$ and consider $\mathcal{P}^{\leq d}$ as a subspace of $L^{p}(d\gamma_{n})$. Then, $ (\mathcal{P}^{\leq d})^{\perp} = \mathrm{cl}_{L^{p'}(d\gamma_{n})}(\mathcal{P}^{\geq d+1})$, where $\mathrm{cl}_{L^{p'}(d\gamma_n)}$ denotes the closure in $L^{p'}(d\gamma_{n})$.
\end{lemma}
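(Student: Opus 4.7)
The plan is to prove the two inclusions separately, the forward one being essentially trivial and the reverse one relying on a bounded low-degree Hermite projection.

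For the inclusion $\mathrm{cl}_{L^{p'}(d\gamma_n)}(\mathcal{P}^{\geq d+1}) \subseteq (\mathcal{P}^{\leq d})^\perp$, I would first observe that by orthogonality of Hermite polynomials in $L^2(d\gamma_n)$ one has $\mathcal{P}^{\geq d+1} \subseteq (\mathcal{P}^{\leq d})^\perp$, and since the annihilator of any subset of $L^p(d\gamma_n)$ is $L^{p'}(d\gamma_n)$-closed (as an intersection of kernels of continuous linear functionals $f \mapsto \langle f, \varphi \rangle$ for $\varphi\in\mathcal{P}^{\leq d}\subseteq L^p(d\gamma_n)$), the desired inclusion follows at once.

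For the reverse inclusion, the key tool is the projection onto the low Hermite modes, viewed as a continuous operator on $L^{p'}(d\gamma_n)$. For each multi-index $\alpha$, the functional
\begin{equation*}
a_\alpha(f) \stackrel{\mathrm{def}}{=} \frac{\langle f, H_\alpha \rangle}{\|H_\alpha\|^{2}_{L^2(d\gamma_n)}}, \qquad f \in L^{p'}(d\gamma_n),
\end{equation*}
is well-defined and bounded on $L^{p'}(d\gamma_n)$, since $H_\alpha$ is a polynomial and hence lies in every $L^p(d\gamma_n)$ with $p<\infty$ by Gaussian integrability, so H\"older's inequality applies. Summing finitely many such functionals, the map
\begin{equation*}
\Pi_{\leq d}(f) \stackrel{\mathrm{def}}{=} \sum_{|\alpha|\leq d} a_\alpha(f)\,H_\alpha
\end{equation*}
is a bounded, finite-rank operator from $L^{p'}(d\gamma_n)$ into $\mathcal{P}^{\leq d}$. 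Moreover, for any $f \in (\mathcal{P}^{\leq d})^\perp$, the defining assumption $\langle f, H_\alpha\rangle = 0$ for all $|\alpha|\leq d$ yields $\Pi_{\leq d}(f)=0$.

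With these preparations in place, fix $f \in (\mathcal{P}^{\leq d})^\perp$ and $\varepsilon>0$. By the density of polynomials in $L^{p'}(d\gamma_n)$ (established in the excerpt preceding the lemma), choose a polynomial $q$ with $\|f-q\|_{L^{p'}(d\gamma_n)}<\varepsilon$, and set $q_{\geq d+1} \stackrel{\mathrm{def}}{=} q - \Pi_{\leq d}(q)\in\mathcal{P}^{\geq d+1}$. Using the linearity and boundedness of $\Pi_{\leq d}$ together with $\Pi_{\leq d}(f)=0$, one estimates
\begin{equation*}
\| f - q_{\geq d+1}\|_{L^{p'}(d\gamma_n)} \leq \| f - q\|_{L^{p'}(d\gamma_n)} + \|\Pi_{\leq d}(q-f)\|_{L^{p'}(d\gamma_n)} \leq \bigl(1+\|\Pi_{\leq d}\|\bigr)\varepsilon,
\end{equation*}
which gives $f\in \mathrm{cl}_{L^{p'}(d\gamma_n)}(\mathcal{P}^{\geq d+1})$ since $\varepsilon$ was arbitrary. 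The only step requiring any thought is the boundedness of $\Pi_{\leq d}$ on $L^{p'}(d\gamma_n)$, but since $\Pi_{\leq d}$ is a finite sum of explicit rank-one maps $f \mapsto a_\alpha(f) H_\alpha$, each continuous by Gaussian integrability of polynomials, this is immediate and there is no serious obstacle.
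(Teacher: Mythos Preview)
Your proof is correct. Both you and the paper establish the forward inclusion in the same (trivial) way, and for the reverse inclusion both rely on the same two ingredients: the density of polynomials in $L^{p'}(d\gamma_n)$ and the finite-rank low-degree Hermite projection (the paper calls it $S_d$, you call it $\Pi_{\leq d}$). The difference is in how these are assembled. The paper argues by contradiction via Hahn--Banach separation: if some $f\in(\mathcal{P}^{\leq d})^\perp$ were not in the closure, a separating functional $h\in L^p$ would exist; subtracting $S_d(h)$ yields $\tilde h$ which annihilates all polynomials and hence vanishes, contradicting $\langle \tilde h,f\rangle>0$. Your approach is direct and constructive: approximate $f$ by a polynomial $q$, project away its low modes, and estimate the distance using boundedness of $\Pi_{\leq d}$ and $\Pi_{\leq d}(f)=0$. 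Your route avoids the second appeal to Hahn--Banach and produces an explicit approximant in $\mathcal{P}^{\geq d+1}$; the paper's route is perhaps more in keeping with the duality theme of the surrounding argument. Either is perfectly fine.
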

Let us first explain why Lemma \ref{clos} combined with \eqref{red5} implies $J(n,p) \leq C_{p} F(n,p')$. Indeed, for all polynomials $h$, we have 
\begin{align*}
\inf_{\varphi \in \mathcal{P}^{\leq d}} \| h -\varphi\|_{L^{p}(d\gamma_{n})} \stackrel{\eqref{hahnba}}{=} \sup_{\substack{f \in (\mathcal{P}^{\leq d})^{\perp}\\ \|f\|_{L^{p'}(d\gamma_{n})} \leq 1}}  |\langle h, f \rangle|  = \sup_{\substack{f \in \mathcal{P}^{\geq d+1}\\ \|f\|_{L^{p'}(d\gamma_{n})} \leq 1}} &  |\langle h, f \rangle| 
\\ & \stackrel{\eqref{red5}}{\leq}  \frac{M_{p'} F(n,p')}{m_{p} \sqrt{d+1}}   \| \nabla h\|_{L^{p}(d\gamma_{n})},
\end{align*}
where in the second equality we used Lemma \ref{clos}. This implies that $J(n,p) \leq C_{p} F(n,p')$ with $C_{p} = \frac{M_{p'}}{m_{p}}$ and Theorem \ref{thm1} follows from the lemma. \hfill$\Box$

\begin{proof}[Proof of Lemma \ref{clos}]
The inclusion $\mathrm{cl}_{L^{p'}(d\gamma_{n})}(\mathcal{P}^{\geq d+1}) \subseteq (\mathcal{P}^{\leq d})^{\perp}$ is trivial and follows from $\mathcal{P}^{\geq d+1} \subseteq (\mathcal{P}^{\leq d})^{\perp}$ and the fact that $(\mathcal{P}^{\leq d})^{\perp}$ is a closed subspace of $L^{p'}(d\gamma_{n})$.

 To verify the reverse inclusion $ (\mathcal{P}^{\leq d})^{\perp} \subseteq \mathrm{cl}_{L^{p'}(d\gamma_{n})}(\mathcal{P}^{\geq d+1})$  assume the contrary, i.e.,  that there exists $f \in (\mathcal{P}^{\leq d})^{\perp} \subseteq L^{p'}(d\gamma_{n})$  and $ f \notin  \mathrm{cl}_{L^{p'}(d\gamma_{n})}(\mathcal{P}^{\geq d+1})$. By the Hahn--Banach theorem, there exists a nonzero element $h \in L^{p}(\gamma_{n})$ such that  $\langle h, f\rangle >0$, and $\langle h, \varphi \rangle =0$ for all  $\varphi \in \mathrm{cl}_{L^{p'}(d\gamma_{n})}(\mathcal{P}^{\geq d+1})$. Define 
 \begin{align*}
 S_{d}(h) \stackrel{\mathrm{def}}{=} \sum_{|\alpha|\leq d} c_{\alpha} H_{\alpha}, \quad \text{where} \quad c_{\alpha} = \frac{\langle h, H_{\alpha} \rangle}{\langle H_{\alpha}, H_{\alpha} \rangle }
 \end{align*}
 and set $\tilde{h} = h - S_{d}(h)$. Clearly $\langle \tilde{h}, f \rangle = \langle h, f \rangle >0$. On the other hand we have 
 $\langle \tilde{h}, H_{\alpha} \rangle =0$  for all $|\alpha| \leq d$ which is the same as $\langle \tilde{h}, \psi \rangle =0$ for all $\psi \in \mathcal{P}^{\leq d}$. Also notice that $\langle \tilde{h}, \varphi \rangle  = \langle h, \varphi \rangle =0$ for all $\varphi \in \mathcal{P}^{\geq d+1}$. It follows that  $\langle \tilde{h}, g \rangle =0 $ for all polynomials $g$. Thus $\tilde{h}=0$ almost surely and this contradicts the fact that $\langle \tilde{h}, f \rangle >0$.
 \end{proof}

\begin{remark}
A version of the equivalence  
\begin{align}\label{eq22}
c_{p} F^{h}(n,p') \stackrel{p\geq 2}{\leq} J^{h}(n,p) \stackrel{1<p\leq 2}{\leq } C_{p} F^{h}(n,p')
\end{align}
also holds on the Hamming cube $\{-1,1\}^{n}$, where the constants $F^{h}, J^{h}$ are defined in the corresponding way as in \eqref{rfreud}-\eqref{jackson} and $|\nabla f| = \sqrt{\sum_{j=1}^{n} |D_{j} f|^{2}}$ for $f : \{-1,1\}^{n} \to \mathbb{C}$. The proofs proceed verbatim. The only difference is that the first inequality in \eqref{eq22} holds only for $p \geq 2$, and the second inequality for $p \in (1, 2]$ due to the fact that the Riesz transform on the Hamming cube is bounded (the analogue of the  second inequality in \eqref{riesz}) only for $p\geq 2$ (see \cite{LP98}). 

\end{remark}

\subsection{Proof of Corollary~\ref{sol1}}
The case $d=1$ is simply the Poincar\'e inequality, so we will assume that $d\geq2$. In \cite{Fr1}, Freud obtained an analogue of Jackson's inequality with weight $e^{-x^{2}/2}$ (see also \cite[Sections~3-4]{Lub07}), that is, he showed that for every $d\geq2$ the estimate
\begin{align}\label{ffr}
\inf_{\varphi \in \mathcal{P}^{\leq d-1}_{\mathbb{R}}} \left(\int_{\mathbb{R}} |(f(x) - \varphi(x))e^{-x^{2}/2}|^{q}dx \right)^{1/q} \leq \frac{C}{\sqrt{d-1}}\left(\int_{\mathbb{R}} |f'(x) e^{-x^{2}/2}|^{q} dx \right)^{1/q}
\end{align} 
holds for all polynomials $f : \mathbb{R} \to \mathbb{R}$, and all  $q \in [1, \infty]$, where $C$ is a universal constant. Here $\mathcal{P}^{\leq d-1}_{\mathbb{R}}$ denotes the space of real-valued polynomials of degree at most $d-1$.  
Rescaling the variables we rewrite \eqref{ffr} as 
\begin{align}\label{jack22}
\inf_{\varphi \in \mathcal{P}^{\leq d-1}_{\mathbb{R}}} \| f - \varphi\|_{L^{q}(d\gamma_{1})} \leq \frac{C\sqrt{q}}{\sqrt{d-1}} \| \nabla f\|_{L^{q}(d\gamma_{1})}
\end{align}

Estimate \eqref{jack22} easily extends to complex-valued polynomials $f$ provided that the infimum in  the left hand side is taken over complex-valued polynomials $\varphi$ of degree at most $d-1$. Indeed, if $f : \mathbb{R} \to \mathbb{C}$ is complex-valued polynomial then we can decompose $f = f_{1}+if_{2}$ where $f_{1}, f_{2}$ are real-valued polynomials. Finally, applying \eqref{jack22} to each $f_{1}$ and $f_{2}$ separately and using the triangle inequality, we get 
\begin{align*}
\inf_{\varphi \in \mathcal{P}^{\leq d-1}} \| (f_{1}+if_{2}) - &\varphi\|_{L^{q}(d\gamma_{1})}  \leq \inf_{\varphi \in \mathcal{P}^{\leq d-1}_{\mathbb{R}}} \| f_{1} - \varphi\|_{L^{q}(d\gamma_{1})} +\inf_{\varphi \in \mathcal{P}^{\leq d-1}_{\mathbb{R}}} \| f_{2} - \varphi\|_{L^{q}(d\gamma_{1})} 
\\ & \leq \frac{C\sqrt{q}}{\sqrt{d-1}} ( \| \nabla f_{1}\|_{L^{q}(d\gamma_{1})}+ \| \nabla f_{2}\|_{L^{q}(d\gamma_{1})}) \leq \frac{2 C\sqrt{q}}{\sqrt{d-1}} \| \nabla(f_{1}+if_{2})\|_{L^{q}(d\gamma_{1})},
\end{align*}
which is equivalent to $J(n,q) \leq 2C\sqrt{q}$. Applying Theorem~\ref{thm1} with $q \in (1, \infty)$ we deduce that there exists a finite constant $B_q$ such that 
\begin{align}\label{app11}
\sqrt{d} \| g\|_{L^{q'}(d\gamma_{1})} \leq B_{q}\| \nabla g\|_{L^{q'}(d\gamma_{1})}
\end{align}
holds for all $g \in \mathcal{P}^{\geq d}$, where $q'=\frac{q}{q-1}$ is the conjugate exponent to $q$. This proves the first inequality of \eqref{naor}. Using Meyer's estimate \eqref{riesz}, \eqref{app11} implies that  $$\sqrt{d} \|g\|_{L^{q'}(d\gamma_{1})} \leq B_{q}M_{q'} \|L^{1/2} g\|_{L^{q'}(d\gamma_{1})}.$$ Finally, iterating the last inequality we obtain 
$$d \|g\|_{L^{q'}(d\gamma_{1})} \leq (B_{q} M_{q'})^{2}\| Lg\|_{L^{q'}(d\gamma_{1})}$$
and this proves the second inequality of \eqref{naor}. \hfill$\Box$


\begin{remark}
It is well-known (see \cite{EI2}) that for every $p>1$, there exists a finite constant $K_{p}>1$ such that for any $d\geq1$, all polynomials $f\in\mathcal{P}^{\leq d}$ satisfy the moment comparison
\begin{equation} \label{eq:moments}
\|f\|_{L_p(d\gamma_n)} \leq K_{p}^d \|f\|_{L_1(d\gamma_n)}.
\end{equation}
It follows from \cite[Theorem~5]{CE20} that for any $p>1$,
\begin{equation} \label{eq:ce}
\|L^{-1}f\|_{L_p(d\gamma_n)} \leq C_p \frac{\|f\|_{L_p(d\gamma_n)}}{1+\log\big(\|f\|_{L_p(d\gamma_n)}\big/\|f\|_{L_1(d\gamma_n)}\big)}.
\end{equation}
Inequality \eqref{eq:ce} implies that Mendel and Naor's question \eqref{conj2} has an affirmative answer for those functions satisfying the converse of \eqref{eq:moments}, i.e., $\|f\|_{L_p(d\gamma_n)} \geq K_{p}^d \|f\|_{L_1(d\gamma_n)}$ for some $K_p>1$. While this condition is not true for all $f\in\mathcal{P}^{\geq d}$, \eqref{eq:ce} confirms \eqref{conj2} for a conceptually different class of functions $f$. We refer to \cite{CE20} for further information on \eqref{eq:ce}, including versions on the Hamming cube and vector-valued extensions.
\end{remark}

\subsection{Proof of Theorem~\ref{anal1}}
Recall that a polynomial $f : \mathbb{R}^{2} \to \mathbb{C}$ is called analytic if  $f(x_{1}, y_{1}) = g(x_{1}+iy_{1})$ for some polynomial $g : \mathbb{R} \to \mathbb{C}$. Similarly analytic polynomials $f : \mathbb{R}^{2n} \to \mathbb{C}$ can be written as 
\begin{align*}
f(z) = \sum_{|\alpha| \leq d} c_{\alpha} z^{\alpha}
\end{align*}
for some $d\geq 0$, where $z^{\alpha} \stackrel{\mathrm{def}}{=} \prod_{j =1}^{n} z_j^{\alpha_{j}}$ for some multi-index $\alpha \in (\mathbb{Z}_{+})^{n}$ and $z=(z_{1}, \ldots, z_{n}) \in \mathbb{C}^{n}$, where $z_{j} = z_{j}+i y_{j}$. In what follows, we will write $f = f(z_{1}, \ldots, z_{n})$ (or simply $f(z)$) instead of $f(x_{1}, y_{1}, \ldots, x_{n}, y_{n})$ for an analytic polynomial $f$.  For $z=(z_{1}, \ldots,z_{n}) \in \mathbb{C}^n$ and $w \in \mathbb{C}$ we set $wz = (wz_{1}, \ldots, wz_{n})$.

\begin{lemma}[Heat-smoothing for analytic polynomials]
For any finite $p\geq 1$ we have 
\begin{align}\label{heatsm}
\left\| \sum_{|\alpha|\geq d} e^{-t|\alpha|} c_{\alpha}  z^{\alpha}\right\|_{L^{p}(d\gamma_{2n})} \leq  e^{-t d} \left\| \sum_{|\alpha|\geq d} c_{\alpha} z^{\alpha}\right\|_{L^{p}(d\gamma_{2n})}
\end{align}
for all $d\geq 0$, all $t \geq 0$ and all  finite sums $\sum_{|\alpha|\geq d} c_{\alpha} z^{\alpha}$. 
\end{lemma}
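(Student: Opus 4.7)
The plan is to rewrite the LHS as a pointwise dilation. Setting $\rho = e^{-t} \in (0,1]$ and $f(z) = \sum_{|\alpha|\geq d} c_\alpha z^\alpha$, one has
\[
\sum_{|\alpha|\geq d} e^{-t|\alpha|} c_\alpha z^\alpha = f(\rho z),
\]
so \eqref{heatsm} is equivalent to the estimate $\|f(\rho z)\|_{L^p(d\gamma_{2n})} \leq \rho^d \|f\|_{L^p(d\gamma_{2n})}$ for every analytic $f \in \mathcal{P}^{\geq d}$ and every $\rho \in [0,1]$. The trivial bound $\|f(\rho z)\|_p \leq \|f\|_p$, which is just $L^p$-contractivity of the Ornstein--Uhlenbeck semigroup applied to $f$, only captures the factor $\rho^0 = 1$; extracting the extra $\rho^d$ will require a genuine use of the tail-space hypothesis.

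To this end, I will decompose $f = \sum_{k \geq d} f_k$ into its homogeneous components and introduce the auxiliary polynomial
\[
g(w,z) := \sum_{k \geq d} w^{k-d} f_k(z), \qquad (w,z) \in \CC \times \CC^n,
\]
which is analytic in $w$ for each fixed $z$ and satisfies $g(1,z) = f(z)$ and $g(\rho,z) = \rho^{-d} f(\rho z)$. The task then reduces to proving $\psi(\rho) \leq \psi(1)$ for every $\rho \in [0,1]$, where
\[
\psi(w) := \|g(w,\cdot)\|_{L^p(d\gamma_{2n})}^p.
\]

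I will establish this by verifying two structural properties of $\psi$ on $\CC$: \emph{subharmonicity} and \emph{radial symmetry}. For each fixed $z$, the map $w \mapsto g(w,z)$ is a polynomial, hence holomorphic, so $w \mapsto |g(w,z)|^p$ is subharmonic for $p \geq 1$; integrating the submean-value inequality against $d\gamma_{2n}(z)$ via Fubini---legitimate since $g$ is a polynomial and hence lies in every $L^p(d\gamma_{2n})$ in $z$---yields the subharmonicity of $\psi$. Radial symmetry rests on the identity $g(we^{i\theta}, z) = e^{-i\theta d}\, g(w, e^{i\theta} z)$, which is immediate from the homogeneity $f_k(e^{i\theta}z) = e^{ik\theta} f_k(z)$, combined with the invariance of $d\gamma_{2n}$ under the unitary action $z \mapsto e^{i\theta} z$ on $\CC^n$.

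With subharmonicity and radial symmetry in hand, the conclusion will follow from the maximum principle: $\psi$ is continuous and subharmonic on the closed unit disk, hence attains its maximum on the boundary circle $\{|w|=1\}$, on which it is constantly equal to $\psi(1)$ by radial symmetry. Therefore $\psi(\rho) \leq \psi(1)$ for every $\rho \in [0,1]$, which rearranges to the desired $\|f(\rho z)\|_p \leq \rho^d \|f\|_p$ upon multiplying by $\rho^{pd}$. I expect the most conceptually delicate step to be the introduction of the auxiliary polynomial $g$ together with the recognition that the problem reduces to a subharmonic-plus-radial argument on a one-dimensional complex disk; once this is in place, the remainder is essentially mechanical, and the only routine technical point is to confirm that the submean-value inequality for $|g(w,z)|^p$ is preserved by the $z$-integration.
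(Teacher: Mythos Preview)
Your proof is correct. Both your argument and the paper's rest on the same Schwarz-lemma core---dividing out a factor $w^d$ and invoking the maximum principle on the closed unit disk---together with the invariance of $\gamma_{2n}$ under the diagonal rotation $z\mapsto e^{i\theta}z$. The packaging, however, is genuinely different. The paper proceeds abstractly: it regards $k\mapsto e^{-tk}$ as a linear functional on $\mathrm{span}\{w^d,\dots,w^m\}\subset C(\mathbb{T})$, computes its norm to be $e^{-td}$ via the one-variable maximum principle applied to $p(w)/w^d$, then uses Hahn--Banach and the Riesz representation theorem to produce a complex measure $\mu$ on $\mathbb{T}$ with $\|\mu\|_{TV}=e^{-td}$ and $\int_{\mathbb{T}}w^k\,d\mu=e^{-tk}$ for $d\le k\le m$; the lemma then follows from the identity $\sum e^{-t|\alpha|}c_\alpha z^\alpha=\int_{\mathbb{T}}f(wz)\,d\mu(w)$, Minkowski's inequality, and rotational invariance. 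Your route is more elementary and self-contained, bypassing Hahn--Banach and Riesz representation and going directly from pointwise subharmonicity of $w\mapsto|g(w,z)|^p$ to the radial maximum principle after integration in $z$. The trade-off is that the paper's template is reusable verbatim: the identical Hahn--Banach/Riesz scheme, with Bernstein's inequality for analytic polynomials on $\mathbb{T}$ replacing the Schwarz-type bound, immediately yields $\|Lf\|_{L^p(d\gamma_{2n})}\le d\|f\|_{L^p(d\gamma_{2n})}$ for analytic $f\in\mathcal{P}^{\le d}$ (Lemma~\ref{bern1}), whereas a direct subharmonicity argument would have to be redesigned for each new multiplier.
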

\begin{proof}
Fix $t\geq 0$ and any integer $m \geq d$.  Let $\mathbb{T}$ be the unit circle on the complex plane and let $K$ be the linear functional on the subspace $\mathrm{span}_{\mathbb{C}}\{w^{d}, w^{d+1}, \ldots, w^{m}\} \subseteq C(\mathbb{T})$ which is defined by
\begin{align*}
K(z^{k}) = e^{-tk} \quad \text{for} \quad k=d,\ldots, m.
\end{align*}
Clearly $K(p(w)) = p(e^{-t})$ for all $p(w) = \sum_{k=d}^{m} a_{k} w^{k}$. By the Hahn--Banach theorem, we can extend $K$ to $\widetilde{K}$ defined on $C(\mathbb{T})$ so that 
\begin{align*}
|\widetilde{K}(g)| \leq C(t,d) \|g\|_{C(\mathbb{T})} \quad \text{for all} \quad g \in C(\mathbb{T}),
\end{align*}
where $C(t,d)$ is the smallest nonnegative constant such that 
\begin{align}\label{max1}
|p(e^{-t})| \leq C(t,d) \| p\|_{C(\mathbb{T})} \quad \text{for all} \quad p(w) = \sum_{k=d}^{m} a_{k} w^{k}.
\end{align}

To find $C(t,d)$, fix any polynomial $p(w)$ of the above form and consider the analytic function $h(w) = \frac{p(w)}{w^{d}}$ defined in the closed unit disk $\mathbb{D} = \{w \in \mathbb{C}: |w|\leq 1 \}$. The inequality $\sup_{w \in \mathbb{T}}|h(w)| \leq \| p\|_{C(\mathbb{T})}$ and the maximum principle imply that $|p(w)|\leq |w|^{d}\| p\|_{C(\mathbb{T})}$ for all $w \in \mathbb{D}$. In particular $|p(e^{-t})| \leq e^{-td} \| p\|_{C(\mathbb{T})}$, i.e., the inequality \eqref{max1} holds with $C(t,d) = e^{-td}$. By the Riesz representation theorem there exists a complex-valued measure $d\mu$ on $\mathbb{T}$ such that $\widetilde{K}(g) = \int_{\mathbb{T}} g(w) d\mu(w)$ and $\|\mu\|_{TV} = e^{-td}$, where $\|\mu\|_{TV}$ denotes the total variation norm of $d\mu$. 

Fix a polynomial of the form $\sum_{|\alpha|\geq d} c_{\alpha}z^{\alpha}$ and pick $m$ such that $c_\alpha=0$ for $|\alpha|>m$. Then, since $\widetilde{K}$ extends $K$, we have 
\begin{align*}
\left\| \sum_{|\alpha|\geq d} e^{-t|\alpha|}  c_{\alpha}  z^{\alpha}\right\|_{L^{p}(d\gamma_{2n})}&
 = 
\left\| \sum_{d\leq|\alpha|\leq m} \int_{\mathbb{T}}w^{|\alpha|} c_{\alpha}  z^{\alpha} d\mu(w)\right\|_{L^{p}(d\gamma_{2n})}\\
&\leq\int_{\mathbb{T}} \left\| \sum_{|\alpha|\geq d} c_{\alpha}  (wz)^{\alpha} \right\|_{L^{p}(d\gamma_{2n})}d|\mu|(w) = e^{-t d} \left\| \sum_{|\alpha|\geq d} c_{\alpha} z^{\alpha}\right\|_{L^{p}(d\gamma_{2n})},
\end{align*}
where the last equality follows from the (complex) rotational invariance of the Gaussian measure and the estimate 
$\|\mu\|_{TV} = e^{-td}$.
\end{proof}

To prove  Theorem~\ref{anal1} we integrate \eqref{heatsm} in $t$ over the ray $[0, \infty)$ to obtain  
\begin{equation*}
\begin{split}
\left\| \sum_{|\alpha|\geq d}  \frac{1}{|\alpha|}c_{\alpha} z^{\alpha}\right\|_{L^{p}(d\gamma_{2n})} & = \left\| \int_0^\infty \sum_{|\alpha|\geq d} e^{-t|\alpha|} c_\alpha z^\alpha \ dt\right\|_{L_p(d\gamma_{2n})}
\\ &  \leq \int_{0}^{\infty} \left\| \sum_{|\alpha|\geq d} e^{-t|\alpha|}c_{\alpha}  z^{\alpha}\right\|_{L^{p}(d\gamma_{2n})}  dt 
 \stackrel{\eqref{heatsm}}{\leq}  \frac{1}{d}\left\| \sum_{|\alpha|\geq d} c_{\alpha}  z^{\alpha}\right\|_{L^{p}(d\gamma_{2n})}.
\end{split}
\end{equation*}
Next, notice that a direct computation shows $L z^{\alpha} = |\alpha| z^{\alpha}$. In particular, for a polynomial  $f = \sum_{|\alpha|\geq d} c_{\alpha} z^{\alpha}$, the last inequality (after rescaling the coefficients) implies $d\|f\|_{L^{p}(d\gamma_{2n})} \leq \| L f\|_{L^{p}(d\gamma_{2n})}$. This finishes the proof of \eqref{naoran}. To prove \eqref{gradan}, notice that we can write  \hfill$\Box$
\begin{equation*}
\begin{split}
\Bigg\| \sum_{|\alpha|\geq d}  \frac{1}{|\alpha|^{1/2}}c_{\alpha} &z^{\alpha}\Bigg\|_{L^{p}(d\gamma_{2n})}  = \frac{1}{\sqrt{\pi}} \left\| \int_0^\infty \sum_{|\alpha|\geq d} e^{-t|\alpha|} c_\alpha z^\alpha \ \frac{dt}{\sqrt{t}}\right\|_{L_p(d\gamma_{2n})}
\\ &  \leq \frac{1}{\sqrt{\pi}} \int_{0}^{\infty} \left\| \sum_{|\alpha|\geq d} e^{-t|\alpha|}c_{\alpha}  z^{\alpha}\right\|_{L^{p}(d\gamma_{2n})}  \frac{dt}{\sqrt{t}} 
 \stackrel{\eqref{heatsm}}{\leq}  \frac{1}{\sqrt{d}}\left\| \sum_{|\alpha|\geq d} c_{\alpha}  z^{\alpha}\right\|_{L^{p}(d\gamma_{2n})},
\end{split}
\end{equation*}
which is equivalent to $\sqrt{d}\|f\|_{L^{p}(d\gamma_{2n})} \leq \| L^{1/2} f\|_{L^{p}(d\gamma_{2n})}$. The desired inequality \eqref{gradan} now readily follows from \eqref{riesz} with $K_p = \tfrac{1}{m_p}$.

\subsection{Proof of Theorem~\ref{analgrad}} The approach will be similar to that of the previous section. We start by showing the corresponding (sharp) statement for $L$.
\begin{lemma}\label{bern1}
For any $p \geq 1$ and $d\geq 0$, we have 
\begin{align}\label{bern2}
\| Lf \|_{L^{p}(d\gamma_{2n})} \leq d \| f\|_{L^{p}(d\gamma_{2n})}
\end{align}
for any analytic  polynomial $f \in \mathcal{P}^{\leq d}$ on $\mathbb{C}^{n}$. 
\end{lemma}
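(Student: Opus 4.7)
The plan is to mirror the proof of the heat-smoothing lemma, replacing the analytic function $h(w)=p(w)/w^d$ plus maximum principle argument with the classical Bernstein inequality on the unit circle $\mathbb{T}$. Specifically, I want to represent the operator $L$ acting on $\mathcal{P}^{\leq d}$ as averaging $f(wz)$ against a complex measure $\mu$ on $\mathbb{T}$ of total variation at most $d$, and then invoke rotational invariance of $\gamma_{2n}$ together with Minkowski's integral inequality.

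Concretely, I would fix an integer $m \in [0,d]$ and define the linear functional
\begin{equation*}
K : \mathrm{span}_\CC\{w^0, w^1, \ldots, w^d\} \subseteq C(\T) \longrightarrow \CC, \qquad K(w^k) = k \quad (k=0,1,\ldots,d).
\end{equation*}
For $p(w)=\sum_{k=0}^d a_k w^k$ one has $K(p) = \sum_{k=0}^d k a_k = p'(1)$, so the operator norm of $K$ on this subspace equals the supremum of $|p'(1)|$ over polynomials of degree $\leq d$ with $\|p\|_{C(\T)}\leq 1$. By the classical Bernstein inequality on the unit circle, $\|p'\|_{C(\T)} \leq d\|p\|_{C(\T)}$, so $|p'(1)|\leq d\|p\|_{C(\T)}$ and thus $\|K\|\leq d$. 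Hahn--Banach extends $K$ to a functional $\widetilde K$ on $C(\T)$ of norm at most $d$, and the Riesz representation theorem produces a complex Borel measure $\mu$ on $\T$ with $\|\mu\|_{TV}\leq d$ and
\begin{equation*}
\int_\T w^k\, d\mu(w) = k \qquad (k=0,1,\ldots,d).
\end{equation*}

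With $\mu$ in hand, for any analytic $f = \sum_{|\alpha|\leq d} c_\alpha z^\alpha$, we have $Lf(z) = \sum_{|\alpha|\leq d}|\alpha|c_\alpha z^\alpha = \int_\T f(wz)\, d\mu(w)$, so Minkowski's integral inequality and the (complex) rotational invariance of $\gamma_{2n}$ (i.e.\ $z \mapsto wz$ is measure-preserving for $w\in\T$) yield
\begin{equation*}
\|Lf\|_{L^p(d\gamma_{2n})} \leq \int_\T \|f(w\cdot)\|_{L^p(d\gamma_{2n})}\, d|\mu|(w) = \|\mu\|_{TV}\, \|f\|_{L^p(d\gamma_{2n})} \leq d\,\|f\|_{L^p(d\gamma_{2n})},
\end{equation*}
which is \eqref{bern2}.

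The only nontrivial ingredient is the sharp bound $\|K\|\leq d$, and the key observation is that $K$ is precisely evaluation of the derivative at $w=1$, which is exactly the point where Bernstein's inequality applies optimally. Everything else (Hahn--Banach extension, Riesz representation, Minkowski integral inequality, Gaussian rotational invariance) is routine and parallel to the heat-smoothing lemma already established in the paper.
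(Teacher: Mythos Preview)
Your proof is correct and essentially identical to the paper's own argument: both define the functional $K(w^k)=k$ on $\mathrm{span}_\CC\{1,w,\ldots,w^d\}$, identify it as $p\mapsto p'(1)$, bound its norm by $d$ via Bernstein's inequality, extend by Hahn--Banach/Riesz to a measure $\mu$ on $\T$ with $\|\mu\|_{TV}\leq d$, and conclude by Minkowski plus rotational invariance. (The stray clause ``fix an integer $m\in[0,d]$'' is never used and should be deleted.)
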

\begin{proof}
Let $K$ be the linear functional on the subspace $\mathrm{span}_{\mathbb{C}}\{1, w, \ldots, w^{d}\} \subseteq C(\mathbb{T})$ given by
\begin{align*}
K(w^{k}) = k \quad \text{for all} \quad k=0,1,\ldots, d.
\end{align*}
Clearly $K(p(w)) = p'(1)$ for all $p(w) = \sum_{k=0}^{d} a_{k} w^{k}$. By the Hahn--Banach theorem and the Riesz representation theorem there exists complex-valued measure $d\mu$ on $\mathbb{T}$ such that $\int_{\mathbb{T}} p(w) d\mu(w) = p'(1)$ for all $p = \sum_{k=0}^{d} a_{k} w^{k}$ and $\|\mu\|_{TV}$ is the smallest constant such that 
\begin{align}\label{mark1}
|p'(1)| \leq \| \mu \|_{TV} \|p\|_{C(\mathbb{T})} \quad \text{for all} \quad p = \sum_{k=0}^{d} a_{k} w^{k}.
\end{align}
By Bernstein's inequality for trigonometric polynomials we can choose   $\| \mu \|_{TV} = d$ in \eqref{mark1} and this is the best possible constant as the example $p(w)=w^{d}$ shows. 

Next, pick an arbitrary analytic polynomial $f(z)  = \sum_{|\alpha|\leq d} c_{\alpha}z^{\alpha}$ of degree at most $d$. As in the previous section we can write 
\begin{align*}
\|L f \|_{L^{p}(d\gamma_{2n})} = \left\| \int_{\mathbb{T}} f(wz) d\mu(w)\right\|_{L^{p}(d\gamma_{2n})}\leq \int_{\mathbb{T}} \left\|  f(wz) \right\|_{L^{p}(d\gamma_{2n})}d|\mu|(w) = d \|f\|_{L^{p}(d\gamma_{2n})}
\end{align*}
and the proof of the lemma is complete.
\end{proof}

To prove Theorem~\ref{analgrad}  we use the estimate 
\begin{align}\label{lust}
\| L^{1/2} f\|_{L^{p}(d\gamma_{2n})} \leq 2 \| f\|^{1/2}_{L^{p}(d\gamma_{2n})} \| Lf\|^{1/2}_{L^{p}(d\gamma_{2n})},
\end{align}
which is valid for all polynomials $f$ (see \cite[Lemma 19]{EI1}).  Then boundedness of the Riesz transform  \eqref{riesz} combined with estimate  \eqref{lust}  and Lemma~\ref{bern1} imply
\begin{align*}
\|\nabla f\|_{L^{p}(d\gamma_{2n})} &\stackrel{\eqref{riesz}}{\leq} M_{p}  \| L^{1/2} f\|_{L^{p}(d\gamma_{2n})} \\
&\stackrel{\eqref{lust}}{\leq} 2 M_{p} \| f\|^{1/2}_{L^{p}(d\gamma_{2n})} \| Lf\|^{1/2}_{L^{p}(d\gamma_{2n})} \stackrel{\eqref{bern2}}{\leq} 2M_{p} \sqrt{d} \|f\|_{L^{p}(d\gamma_{2n})}.
\end{align*}
This finishes the proof of Theorem~\ref{analgrad}. \hfill$\Box$

\subsection{Proof of Theorem~\ref{thm:mom}} Fix an analytic polynomial $f\in\mathcal{P}^{\leq d}$ on $\mathbb{C}^n$. Then, inequality \eqref{bern2} implies that for every $t\geq0$ and $q\geq1$,
\begin{equation} \label{eq:revheat}
\begin{split}
\|f\|_{L^q(d\gamma_{2n})}& = \left\| \sum_{m=0}^\infty \frac{(tL)^m e^{-tL} f}{m!} \right\|_{L^q(d\gamma_{2n})} \leq \sum_{m=0}^\infty \frac{t^m\|L^me^{-tL}f\|_{L^q(d\gamma_{2n})}}{m!} 
\\ & \stackrel{\eqref{bern2}}{\leq} \sum_{m=0}^\infty \frac{(td)^m \|e^{-tL}f\|_{L^q(d\gamma_{2n})}}{m!} = e^{td} \|e^{-tL}f\|_{L^q(d\gamma_{2n})}.
\end{split}
\end{equation}
Now recall that $[e^{tL}f](w) = f(e^tw)$ for every $t\geq0$ and $w\in\mathbb{C}$. Therefore, \eqref{eq:revheat} can be equivalently rewritten as 
\begin{equation} \label{prejan}
\|f\|_{L^q(d\gamma_{2n})} \leq \frac{1}{\rho^d} \left( \int_{\mathbb{C}^n} |f(\rho z)|^q \ d\gamma_{2n}(z)\right)^{1/q},
\end{equation}
where $\rho\in(0,1]$. A classical result of Janson \cite[Theorem~11]{Jan83} asserts that for every $|\rho|\leq\sqrt{\tfrac{p}{q}}$ and any analytic function $f:\mathbb{C}^n\to\mathbb{C}$,
\begin{equation} \label{janson}
\left( \int_{\mathbb{C}^n} |f(\rho z)|^q \ d\gamma_{2n}(z)\right)^{1/q} \leq \|f\|_{L^p(d\gamma_{2n})}
\end{equation}
and the conclusion of the theorem follows by combining \eqref{prejan} and \eqref{janson}.

\begin{remark} \label{rem:mom}
The simple proof of Theorem \ref{thm:mom} shows that a Bernstein-type inequality in the spirit of \eqref{bern2} for the generator of a hypercontractive semigroup formally implies moment comparison results\footnote{Such estimates are often referred to as Nikol'skii-type inequalities in approximation theory.} such as \eqref{eq:mom}. A simple variant of our argument readily implies a result of Defant and Masty\l o \cite[Theorem~2.1]{DM16}, who proved the optimal $L^p-L^q$ moment comparison for analytic polynomials on the torus $\mathbb{T}^n$, under the additional assumption that $q\geq1$. Indeed, to deduce this result one has to repeat the preceeding argument verbatim with the exception that a result of Weissler \cite[Corollary~2.1]{Wei80} has to be used instead of Janson's inequality \eqref{janson}.
\end{remark}

 
\bibliographystyle{siam}
\bibliography{ReverseV3}

\end{document}